\documentclass[a4paper,12pt,reqno]{amsart}
\usepackage{amssymb,amsfonts,amsmath,amscd,amsthm,latexsym,euscript}
\usepackage{times}

\usepackage{combelowedit} 

\usepackage{tikz}
\usetikzlibrary{shapes.geometric, shapes.misc}

\usepackage[bookmarks=false]{hyperref}
\hypersetup{%
    colorlinks=true,        
    linkcolor=blue,          
    citecolor=blue,         
    urlcolor=blue           
    }

\usepackage[x11names]{xcolor}
\usepackage{listings} 
\definecolor{codegreen}{rgb}{0,0.6,0}
\definecolor{codegray}{rgb}{0.5,0.5,0.5}
\definecolor{codepurple}{rgb}{0.58,0,0.82}
\definecolor{backcolour}{rgb}{0.95,0.95,0.92}
\lstdefinestyle{mystyle}{
    backgroundcolor=\color{backcolour},   
    commentstyle=\color{codegreen},
    keywordstyle=\color{magenta},
    numberstyle=\tiny\color{codegray},
    stringstyle=\color{codepurple},
    basicstyle=\ttfamily\footnotesize,
    breakatwhitespace=false,         
    breaklines=true,                 
    captionpos=b,                    
    keepspaces=true,                 
    numbers=left,                    
    numbersep=5pt,                  
    showspaces=false,                
    showstringspaces=false,
    showtabs=false,                  
    tabsize=2
}
\newtheorem*{theorNo}{Theorem}

\theoremstyle{definition}

\newtheorem*{propNo}{Proposition}

\newtheorem*{corNo}{Corollary}
\newtheorem*{conjectureNo}{Conjecture}

\newtheorem*{resprob}{Research problem}

\newtheorem{ex}{Example}
\theoremstyle{remark}
\newtheorem{rem}{Remark}

\theoremstyle{definition}
\theoremstyle{definition}

\newcommand{\BBR}{\mathbb{R}}\newcommand{\BBC}{\mathbb{C}}
\newcommand{\BBN}{\mathbb{N}} 

\newcommand{\BBZ}{\mathbb{Z}}

\newcommand{\EuA}{{{\EuScript A}}}

\newcommand{\Bone}{{\boldsymbol{1}}}

\newcommand{\bx}{{\boldsymbol{x}}}

\newcommand{\veps}{\varepsilon}

\newcommand{\dd}{\partial}

\DeclareMathOperator{\Van}{Van}

\DeclareMathOperator{\Span}{span}

\DeclareMathOperator{\const}{const}
\DeclareMathOperator{\spanK}{span}

\newcommand{\wedges}{\wedge\cdots\wedge}

\newcommand{\by}[1]{\textrm{{#1}}}
\newcommand{\jour}[1]{\textit{{#1}}}
\newcommand{\vol}[1]{\textbf{{#1}}}
\newcommand{\book}[1]{\textit{{#1}}}

\begin{document}
\title[Iterate Wronskians over $\mathbb{R}^d$: $N$\nobreakdash-bonacci numbers $\gtrsim$ highest total degrees]
{Iterate Wronskians over $\mathbb{R}^d$ as $N$\nobreakdash-ary brackets on $\mathbb{R}[x^1,\ldots,x^d]$:\\[3pt] the $N$\nobreakdash-bonacci numbers bound the highest total degrees}

\author[M.\,G.\,\cb{K}\=eni\cb{n}\v{s}]{Markuss G. \cb{K}\=eni\cb{n}\v{s}${}^{*,\star}$}
\thanks{${}^{*}$\:\textit{Address}:\quad 
Bernoulli Institute for Mathematics, Computer Science \&\ 
Artificial Intelligence, 
University of Groningen, P.O.\,Box\:407, 9700\,AK Groningen, 
The Netherlands.
}
\thanks{${}^{\star}$\:\textit{Address for correspondence} (after 1 September 2026):
ETH Z\"{u}rich -- Department of Mathematics, 
R\"{a}mistrasse 101, CH-8092 Z\"{u}rich, Switzerland.%
}

\author[A.\,V.\,Kiselev]{Arthemy V.\ Kiselev${}^{*,\S}$}
\thanks{${}^{\S}$\:Corresponding author. \textit{E-mail}: \texttt{A.V.Kiselev\symbol{"40}rug.nl}%
}



\subjclass[2010]{
05E15, 
11B39, 
15A15, 
secondary 
05A10, 
05A16, 
17B65
}

\keywords{
Differential polynomial, 
$N$\nobreakdash-ary Lie bracket,
multivariate Wronskian determinant,
Fibonacci numbers,
$N$\nobreakdash-bonacci numbers,
asymptotic growth,
growth of degrees,
Skolem\/--\/Pisot problem.
}

\date{17 August 2026
}

\begin{abstract}
For the algebra $\mathbb{R}[x^1,\ldots,x^d]$ of polynomials in $d\geqslant 1$ variables, regard the complete generalised Wronskian $W_d^k$ of differential order $k\geqslant 1$ over $\mathbb{R}^d$ as the $N=\tbinom{d+k}{d}$\nobreakdash-ary Lie bracket. Take an $N$\nobreakdash-tuple of polynomials, calculate their Wronskian, and keep re-using the newly\/-\/created polynomials to produce more of them. The problem is: how fast do their maximal total degrees grow with the number $n$ of iterations of the bracket\,? Here enter the $N$\nobreakdash-bonacci numbers defined by the recurrence $F^{(N)}_n=F^{(N)}_{n-1}+\cdots+F^{(N)}_{n-N}\in \mathbb{N}$. 
We prove that for any choice of the initial arguments, the sequence of highest total degrees ${\mathsf{D}}^{(N)}_n \geqslant 0$ grows (if at all) asymptotically no faster than the $n$th $N$\nobreakdash-bonacci number: 
$\lim_{n\to+\infty} \bigl({\mathsf{D}}^{(N)}_n / F^{(N)}_n \bigr)<\infty$. We show that for $d=1$ and $k$ odd, the highest polynomial degrees do attain the $N$\nobreakdash-bonacci bound.
\end{abstract}
\maketitle

\subsection*{Introduction}
On the algebra $C^\infty(\BBR^d)$ of smooth functions over $\BBR^d$ with coordinates $\bx=(x^1,\ldots,x^d)$, consider 
the $N$\nobreakdash-linear totally antisymmetric $C^\infty(\BBR^d)$\nobreakdash-valued differential operator $W_d^k=\Bone\wedge \partial_\bx\wedges \partial^k_{\bx\hdots\bx}$ by taking the determinant of $N=\tbinom{d+k}{d}$ functions and their complete set of derivatives up to differential order $k\geqslant 1$ as the rows.\footnote{\label{FootDimJetK}
By convention, the wedge ordering of the derivatives $\dd^{|\sigma|}/\dd\bx^\sigma$, as they mark the determinant rows, is fixed by the lexicographic ordering of the multi-indices~$\sigma$: 
$\varnothing \prec x^1 \prec \ldots \prec x^d \prec x^1x^1 \prec \ldots \prec x^dx^d \prec \ldots \prec \bigl(x^d\bigr)^k$.
At every differential order~$\ell$ given in the range $0\leqslant \ell\leqslant k$,
there are $\binom{\ell + (d-1) }{ d-1 }$ such monomials~$\tau$ of degree~$\ell$ that do not coincide pairwise, hence as many essentially different derivatives $\dd^{|\tau|}/\dd\bx^\tau$ of order~$\ell$. (Note that on $C^\infty(\BBR^d)$, mixed derivatives coincide.) Over all the orders~$\ell$ up to and reaching~$k$, the sum of binomial coefficients yields the size of the Wronskian determinant,
$N=\sum_{\ell=0}^k \binom{\ell + d-1 }{ d-1 } = \binom{k+d}{d}$.

The formula $N=\sum_{\ell=0}^k \binom{\ell + d-1 }{ d-1 } = \binom{k+d}{d}$ expressing the size of the Wronskian determinant is proven by induction over~$\ell$. The base of induction is $\binom{0 + d-1 }{ d-1 } = 1 = \binom{d}{d}$. The inductive step uses the binomial identity $\binom{m}{d} + \binom{m}{ d-1 } = \binom{m+1}{d}$; here $m$ and $d$ are positive integers. Running the differential order $\ell$~up, after $k$~steps we accumulate $\binom{k+d}{d}$ partial derivatives (in $d$ variables) of all nonnegative orders not exceeding~$k$. This proves the claim.} 
The $N$-ary operator $W_d^k$ is the \emph{complete generalised Wronskian determinant} of differential order~$k$ over~$\BBR^d$.

For instance, the only 
Wronskian of two arguments is the operator $W^{0,1} \mathrel{{:}{=}} W_{k=1}^{d=1}=\Bone\wedge \partial/\partial x$, which, evaluated at 
two functions $f,g\in C^\infty(\BBR)$,\label{pWronskD1K1} %
is $W^{0,1}(f,g)=\det\bigl(\begin{smallmatrix} f&g\\f'&g' \end{smallmatrix}\bigr)$.
Yet 
there are two ternary ($N=3$) operators, namely the usual Wronskian
$W^{0,1,2} \mathrel{{:}{=}} W_{d=1}^{k=2}=\Bone\wedge\partial_x\wedge\partial^2_{x}$ 
and the complete generalised Wronskian $W_{d=2}^{k=1}=\Bone\wedge \partial_x\wedge \partial_y$; these structures are given by the determinants
\begin{align}\label{EqExTernaryD1D2}
    W^{0,1,2}(f,g,h)(x)&=\begin{vmatrix}
        f&g&h\\ f'&g'&h'\\ f''&g''&h''
    \end{vmatrix} &&
    \text{and} &
    W_{d=2}^{k=1}(f,g,h)(x,y)&=\begin{vmatrix}
        f&g&h\\ f_x&g_x&h_x \\ f_y&g_y&h_y
    \end{vmatrix},
\end{align}
where $f,g,h$ are suitable functions in $d$~variables, and 
$f_x \mathrel{{:}{=}}\partial f/\partial x$ and $f_y \mathrel{{:}{=}}\partial f/\partial y 
$. 

Let us keep in mind that the Wronskian determinant 
$W^{0,1,\ldots,N-1}\bigl(x^{\alpha_1},\ldots,x^{\alpha_N}\bigr)$
of monomials in a single variable~$x$ again is a monomial in~$x$, and its coefficient is the Vandermonde determinant (relative to the Wronskian and all the lexicographic orderings in~it),
\[
\det\Van_{d=1}^{k=N-1}\bigl(\alpha_1,\ldots,\alpha_N\bigr) =
\det\Van^{0,1,\ldots,N-1}\bigl(\alpha_1,\ldots,\alpha_N\bigr) =
\prod_{1\leqslant i < j \leqslant N} \bigl( \alpha_j - \alpha_i \bigr).
\]
Likewise, the 
generalised Wronskian determinant of monomials in $d$ variables is again a monomial.
In turn, the complete generalised Vandermonde determinants are defined 
(see~\cite{FinDim}) 
by taking the complete generalised Wronskian determinants $W_{d\geqslant1}^{k\geqslant1}$ of $N=\binom{d+k}{d}$ monomials in $d$~variables and then, 
by inspecting the determinant coefficient preceding the monomial.

\begin{ex}\label{ExWronskVanderDimD}
For the two Wronskian determinants in Eq.~\eqref{EqExTernaryD1D2}
, we have that
$ W^{0,1,2}(x^a,x^b,x^c) 
 = \left|\begin{smallmatrix}
         1&1&1 \\ a&b&c \\ a^2 & b^2 & c^2 
     \end{smallmatrix}\right| \cdot x^{a+b+c-3}
$
and
$ W_{d=2}^{k=1}(x^{\ell_1}y^{\ell_2},x^{m_1}y^{m_2},x^{n_1}y^{n_2}) 
=
     \left|\begin{smallmatrix}
         1&1&1 \\ \ell_1 & m_1& n_1 \\ \ell_2 & m_2 & n_2        
     \end{smallmatrix}\right| \cdot x^{\ell_1+m_1+n_1 - 1} \, y^{\ell_2+m_2+n_2 - 1}$.
The rows of (generalised) Vandermonde's determinants are marked by the monomial multi\/-\/indices $1,x,x^2$ and $1,x,y$ that mark the rows of (generalised) 
Wronskians $W^{0,1,2}=\mathbf{1}\wedge\dd_x\wedge\dd_x^2$ and $W_{d=2}^{k=1}=\mathbf{1}\wedge\dd_x\wedge\dd_y$, respectively.
\end{ex}

Appearing in this note in the context of (generalised) Wronskians, the Fibonacci numbers --\,and their generalisations, the $N$-bonacci numbers\,-- serve two tasks:
(A) these numbers' asymptotic growth rate yields an upper bound for the degrees of polynomials which \emph{may occur} in the constructions under study;
(B) showing up in several examples (parametrised by the dimension $d\geqslant 1$ and differential order $k\geqslant 1$), the Fibonacci or $N$-bonacci numbers encode the degrees of polynomials which the Wronskian determinants \emph{do produce} from a suitably taken initial set of monomials (by following an explicitly described scheme for iterating the $N$-ary bracket). In other words, the asymptotic upper bound is attained in many cases.

   Let us summarise these results, respectively:

\smallskip
\textbf{(A):\quad The polynomial degrees cannot grow arbitrarily fast.}\quad
Let us \emph{iterate} the generalised Wronskian determinant $W_{d\geqslant 1}^{k\geqslant 1}$ of differential order $k$ over $\mathbb{R}^d$ as the $N=\tbinom{d+k}{d}$\nobreakdash-ary bracket on the space of polynomials $\mathbb{R}[x^1,\ldots,x^d]$ in $d$ variables. Fix some $N$\nobreakdash-tuple of 
monomial arguments to start.\footnote{\label{FootSeqArg}
By convention, the default ordering of monomial arguments is lexicographic: the $d$ variables obey $x^1\prec\ldots\prec x^d$ and polynomial degrees occur in nondecreasing order: see Eqs.~(\ref{eq:FiboBase}--\ref{EqNbonMonomNbonCoeff}) in Examples~\ref{ex:FiboBase}--\ref{ex:Nbon} on pages~\pageref{ex:FiboBase}--\pageref{EqNbonMonomNbonCoeff}.} 
Every calculation of the monomials' Wronskian produces the monomial (possibly, previously not available) which can from now on be (re)\/used as one of the $N$ arguments of the bracket.
There can be many schemes to re\/-\/use the new monomial, provided its coefficient did not vanish: e.g., the old leftmost argument of the Wronkskian is dropped and the new monomial, now taken with unit coefficient, becomes the last argument. By following all the possible schemes to use the so far attained monomials in the ordered $N$-tuples of arguments, we inspect the top (i.e.\ highest appearing) degree of resulting monomials: 
after $n\in\mathbb{N}$ iterations, how fast does it grow\,?
We prove that this $n$th top 
total degree grows (if at all\footnote{\label{FootIfGrows}
The polynomial degrees can remain bounded uniformly over all $n\in\BBN$.
For example, let $d=1$ and $k=1$, so $N=2$ (see the material 
on p.~\pageref{pWronskD1K1} before 
Eq.~\eqref{EqExTernaryD1D2}), and start the iterations by taking the monomials $a_0=1$ and $a_1=x$: we have that $W_{d=1}^{k=1}\bigl(1,x\bigr) = 1 = a_1 \in \spanK_{\BBR}\langle a_0,a_1\rangle$, thus producing now new degrees in the variable~$x$.\quad
Another example over $\Bbbk=\BBR$ or~$\BBC$ (this time, the Lie algebra $\mathfrak{sl}(2,\Bbbk)$ is perfect, $\spanK_{\Bbbk}[\EuA,\EuA] = \EuA$, with the Lie bracket $[{\cdot},{\cdot}] = \mathbf{1}\wedge \dd_x$): start with the monomials $e\mathrel{{:}{=}} 1$ and $f\mathrel{{:}{=}} -x^2$, whence put $h\mathrel{{:}{=}} [e,f]=-2x$; the triple of quadratic polynomials $e,h,f\in\Bbbk_2[x]$ satisfies the Chevalley relations 
$[h,e]=2e$ and $[h,f]=-2f$.\quad
For larger $N > 2$ over the lowest base dimension $d=1$, the previous example of three\/-\/dimensional Lie algebra $\mathfrak{sl}(2,\Bbbk)$ was generalised in~\cite{ForKac} to the countable sequence of $(N+1)$-\/dimensional strongly homotopy Lie algebras $\bigl(\Bbbk_N[x]$,$W_{d=1}^{k=N}\bigr)$, here $\Bbbk=\BBR$ or~$\BBC$ again.
In our recent work~\cite{FinDim} we describe --\,under the natural assumption $\Bbbk_k[x^1,\ldots,x^d]\subseteq\EuA$\,-- all such finite\/-\/dimensional subalgebras $\EuA \subsetneq \Bbbk[x^1,\ldots,x^d]$, closed w.r.t.\ the Wronskian brackets $W_{d \geqslant 1}^{k \geqslant 1}$ on the spaces of polynomials in $d$~variables.
In all of these examples, during the iteration of the $N$-ary bracket by following any schemes for re\/-\/using its output, the polynomial degrees cannot exceed the maximal degree of monomials in the algebra~$\EuA$ at hand.}\,)
asymptotically not faster than the $n$th $N$\nobreakdash-bonacci number. (Here the Fibonacci sequence has window width 2 in the recurrence $F_n=F_{n-1}+F_{n-2}$, while tribonacci numbers have window width 3; the $N$\nobreakdash-bonacci numbers satisfy $F^{(N)}_n = F^{(N)}_{n-1}+\cdots + F^{(N)}_{n-N}$.) 
The Fibonacci numbers $F_n$ grow asymptotically as $F_n\sim \varphi^n$, with $\varphi=(1+\sqrt{5})/2\approx 1.618\ldots$ the golden ratio; the $N$\nobreakdash-bonacci numbers $F^{(N)}_n$ grow faster:~$\alpha_N\in [\varphi,2)$~and~\mbox{$F^{(N)}_n\sim \alpha_N^n$}.

\smallskip
\textbf{(B):\quad The polynomial degrees' upper bound can be attained.}\quad
We 
investigate whether over arbitrary base dimension $d\geqslant 1$ and 
for the differential order $k\geqslant 1$ of the Wronskian bracket,
the upper bound can (or cannot) be attained.
In the set-up $d\geqslant 1$ and in the lowest order $k=1$ 
in Examples~\ref{ex:FiboBase} and~\ref{ex:FiboOverDim} on p.~\pageref{ex:FiboBase} onwards, we produce a sequence of monomials, with non\/-\/zero coefficients, whose degrees in the base variables are the Fibonacci numbers (up to constants).
For $d=1$ and any odd order $k$ (so $N=k+1$ even), in Example~\ref{ex:Nbon} on p.~\pageref{ex:Nbon} we find $N$ monomial arguments and a way to re-use the newly produced monomials such that their (integer) coefficient never vanishes and the upper bound is attained: the monomial degrees equal the $N$\nobreakdash-bonacci numbers plus a constant.\footnote{\label{FootDegShiftDimD}
The (Laurent) monomial degree shifts --\,in the context of Wronskian determinants $W_{d\geqslant 1}^{k\geqslant 1}$ as the $N$-ary brackets which generalise the Witt algebra structure on $\Bbbk[[x]]$\,-- are discussed in more detail in~\cite{ForKac} (over the base dimension $d=1$) and in~\cite{FinDim} (over higher dimensions $d>1$).} 
(The Fibonacci case is $d=1=k$ and the degree shift is $+1$.) 
We conjecture that for all $N\geqslant 2$ the higher bound~$\alpha_N^n$ can asymptotically be attained as $n\to +\infty$.

But what if, for a chosen way to re\/-\/use the new monomials in the Wronskian bracket, the Vandermonde determinant standing as the \emph{coefficient} of the output hits zero\,? This depends on the initially taken $N$-tuple of arguments and on the scheme -- of re\/-\/using the new monomials -- that determines the linear recurrence relation for the coefficients.
We experimentally find many monomial sequences (over dimensions $d=1$,\ $2$,\ $3$,\ $4$ and differential orders $k=1$,\ $2$,\ $3$) such that their coefficients do not vanish as far as $n=3\,000 
$ iterations 
(see Example~\ref{ex:computerAlgebra} on p.~\pageref{ex:computerAlgebra}). 
To determine whether the coefficients \emph{never} vanish for all $n\in\mathbb{N}$ is a particular case of the Skolem\/--\/Pisot problem, which is NP\nobreakdash-hard.

\medskip
This note is structured as follows: in Examples~\mbox{\ref{ex:FiboBase}--\ref{ex:Nbon}} we showcase the degree growth as Fibonacci or $N$\nobreakdash-bonacci numbers.
These examples introduce 
the research problem; 
our main theorem controls the rapidity of growth for the monomial degrees, relating it to the Fibonacci (or $N$-bonacci) numbers.
An immediate corollary 
--\,in the context of Kirillov's problem 
from~\cite{KirillovKontsevich}\,-- bounds the growth of \emph{dimension} for the subspaces generated in the algebra of polynomials by the $n$-fold iteration of the Wronskian as the $N$-ary bracket.
(For reader's convenience, 
in the proposition on p.~\pageref{prop:NbonGrowth} we recall
--\,and in Appendix~\ref{SecAppReProof} we re\/-\/prove\,-- 
the rapidity of asymptotic growth for the $N$-bonacci numbers;
our bound is asymptotically $\exp(1)/2\approx 1.359$ times less sharp than Wolfram's in~\cite{Wolfram98} yet our proof is more geometric.)
We conclude this note with a conjecture, itself based on the many examples:
there exists an initially taken $N$-tuple of monomials in $d$~variables such that by iterating the Wronskian bracket and re\/-\/using its output, one obtains the sequence of monomials whose degrees grow as fast as the $N$-bonacci numbers
and whose coefficients, given by the generalised Vandermonde determinants, never vanish.

\medskip
We place this note in a broader context of
  (\textit{i})  $N$-ary Lie structures, in particular strongly homotopy 
Lie algebras of (non)\/polynomial functions on~$\BBR^d$ with the (in)complete generalised Wronskian determinants as the $N$-ary brackets (see~\cite{FinDim,ForKac,PRG25,Yer25} and references therein), and
  (\textit{ii})  Kirillov's problem about Lie algebras of intermediate growth, specifically about the rapidity of growth for the dimensions of vector spaces which are spanned by the output of $n\in\BBN$ iterations of the Lie bracket, starting with a generic tuple of smooth vector fields\footnote{\label{FootBinaryNary}
The upgrade of the binary Lie bracket for vector fields (their differential order $p=1$ is minimal) to $N$-ary Lie\/-\/type brackets is done by using higher differential order operators on the line~$\BBR$ and their generalisations over~$\BBR^{d\geqslant 1}$ (see~\cite{ForKac,PRG25} and~\cite{cpWron}).} 
(see~\cite{KirillovKontsevich} as well as~\cite{FinDim} and references therein).

\centerline{\rule{3.5in}{0.7pt}}

Let us have three examples: over the various base dimensions $d\geqslant 1$, 
these examples suggest which polynomial degrees can actually be produced for monomials in $d$~variables by iterating the (generalised) Wronskian determinant of a chosen differential order~$k\geqslant 1$.

\begin{ex}[$d=1=k$: Fibonacci]\label{ex:FiboBase}
    Put $W^{0,1}=\Bone \wedge \partial/\partial x$ as the Wronskian of first differential order $k=1$ over $\BBR^1\ni x$. Denote by $F_n$ the $n$th Fibonacci number: $F_0=0$, $F_1=1$, and $F_n=F_{n-1}+F_{n-2}$. Then the Wronskian of monomials $x^{F_n+1}$ and $x^{F_{n+1}+1}$, whose degrees are successive Fibonacci numbers shifted by $+1$, is
    \begin{equation}\label{eq:FiboBase}
        W^{0,1} \bigl( x^{F_n+1}, x^{F_{n+1}+1} \bigr) = F_{n-1}\, x^{F_{n+2}+1}.
    \end{equation}
In other words, 
the result is a monomial with the next Fibonacci number plus shift $+1$ as the degree;
the new monomial's 
coefficient is not zero for~$n>1$.

Notice that, by taking the newly\/-\/created monomial as the last subsequent argument and dropping the lowest\/-\/degree old first argument, i.e.\ calculating $W^{0,1}\bigl( x^{F_{n+1}+1}, x^{F_{n+2}+1} \bigr)$, one constructs the sequence of monomials --\,starting from the powers $x^2=x^{F_2+1}$ and $x^3=x^{F_3+1}$\,-- whose degrees are successive Fibonacci numbers. 
\end{ex}

\begin{ex}[$d>1$, $k=1$: Fibonacci]\label{ex:FiboOverDim}
Let us generalise~\eqref{eq:FiboBase}, 
now over $\BBR^d\ni (x^1,\ldots,x^d)=\bx$. Put $W_d^{k=1}=\Bone \wedge \partial/\partial x^1\wedges \partial/\partial x^d$ as 
the first\/-\/differential\/-\/order 
Wronskian over~$\BBR^d$. 
Let us choose the $N=d+1$ monomial arguments to iterate 
the $N$-ary bracket~$W_{d>1}^{k=1}$.

\smallskip\noindent$\bullet$\quad
If $d=2$ (so we put $x\mathrel{{:}{=}} x^1$ and $y\mathrel{{:}{=}} x^2$ for brevity, cf.\ Example~\ref{ExWronskVanderDimD}), then $N=3$, and for any $n\geqslant 5$ we take the three arguments
$xy$, $x^{F_n} y^{F_{n-1}}$, and $x^{F_{n+1}} y^{F_{n}}$.
Their Wronskian determinant equals
\[
\begin{vmatrix}
xy & x^{F_n} y^{F_{n-1}} & x^{F_{n+1}} y^{F_{n}} \\
y & F_n\, x^{F_n - 1} y^{F_{n-1}} & F_{n+1}\,x^{F_{n+1} - 1} y^{F_{n}} \\
x & F_{n-1}\,x^{F_n} y^{F_{n-1} - 1} & F_{n}\,x^{F_{n+1}} y^{F_{n} - 1}
\end{vmatrix}
= \begin{vmatrix}
1 & 1 & 1 \\ 1 & F_n & F_{n+1} \\ 1 & F_{n-1} & F_n 
\end{vmatrix} \cdot x^{F_n + F_{n+1} } \cdot y^{F_{n-1} + F_{n} }.
\]
The polynomial degrees in the new monomial $x^{F_{n+2}} y^{F_{n+1}}$ are given by the Fibonacci numbers with the same pattern as before: 
from the arguments' degree pairs $(F_n,F_{n-1})$ and $(F_{n+1},F_{n})$ 
we obtain the degree pair $(F_{n+2},F_{n+1})$ for $x$ and~$y$.
Note that the degrees of either $x$ or~$y$ are both strictly increasing and never vanish as 
$n$~tends to~$+\infty$. 
The $(3\times 3)$-size determinant coefficient of the newly produced monomial expands to\footnote{\label{FootCassini}
Transforming the first term amounts to Cassini's identity:
\[
\left|\begin{smallmatrix} F_n & F_{n+1} \\ F_{n-1} & F_n \end{smallmatrix}\right|
= \left|\begin{smallmatrix}
F_{n-1} + F_{n-2} & F_n + F_{n-1} \\ F_{n-1} & F_{n-1} + F_{n-2}
\end{smallmatrix}\right|
= \left|\begin{smallmatrix} 1 & 1 \\ 1 & 0 \end{smallmatrix}\right| 
\cdot \left|\begin{smallmatrix} F_{n-1} & F_n \\ F_{n-2} & F_{n-1} \end{smallmatrix}\right| 
= \left|\begin{smallmatrix} 1 & 1 \\ 1 & 0 \end{smallmatrix}\right| ^q
\cdot \left|\begin{smallmatrix} F_{n-q} & F_{n-q+1} \\ F_{n-q-1} & F_{n-q} \end{smallmatrix}\right|.
\]
Now let $q\mathrel{{:}{=}} n-1$, thus reaching
\[
(-1)^{n-1}\, \left|\begin{smallmatrix} F_1 & F_2 \\ F_0 & F_1 \end{smallmatrix}\right|
= (-1)^{n-1}\, \left|\begin{smallmatrix} 1 & 1 \\ 0 & 1 \end{smallmatrix}\right|
= (-1)^{n-1}.
\]
In the expansion's second term, we use the Fibonacci recurrence relation
$F_{n+1} = F_{n} + F_{n-1}$ (and likewise for the indices $n$ and $n-1$) three times:
$F_{n+1} -2F_{n} + F_{n-1} = F_{n-3}$.}
\[
\left|\begin{smallmatrix} F_n & F_{n+1} \\ F_{n-1} & F_n \end{smallmatrix}\right|
 + \bigl( F_{n+1} - 2F_{n} + F_{n-1} \bigr)
= F_{n-3} - (-1)^n \neq 0 \qquad (n\geqslant 5).
\]
We conclude that
\begin{equation}\label{EqD2CoeffFiboPlus}
W_{d=2}^{k=1}\bigl( xy, x^{F_n} y^{F_{n-1}}, x^{F_{n+1}} y^{F_{n}} \bigr)
= x^{F_{n+2}} y^{F_{n+1}} \cdot \bigl( F_{n-3} - (-1)^n \bigr);
\end{equation}
the coefficient does not vanish\footnote{\label{FootExD2Low}
The coefficient equals $+1$ at $n=3$ but it vanishes incidentally at $n=4$.
If $n=1$ or $n=2$, either Wronskian $W_{d=2}^{k=1}\bigl(xy,x^1 y^0,x^1 y^1\bigr)$
and $W_{d=2}^{k=1}\bigl(xy,x^1 y^1,x^2 y^1\bigr)$ has two repeated arguments, hence it vanishes identically.
At $n=0$ with $F_{-1}=1$, the Wronskian is $W_{d=2}^{k=1}\bigl(xy,x^0 y^1,x^1 y^0\bigr) = xy$.} 
for all $n\geqslant 5$.
The index of Fibonacci number $F_{n-3}$, still showing up in the monomial coefficient, retards w.r.t.\ the index in Eq.~\eqref{eq:FiboBase} over dimension $d=1$ (now we have $d=2$); in the future formula~\eqref{eq:FiboOverDim} over $d\geqslant 3$ the now\/-\/retarding Fibonacci term will be lost yet the currently emerging Cassini term will persist.

\smallskip\noindent$\bullet$\quad
If $d\geqslant 3$, we let the first $d-2$ arguments 
be the unit~$1$ followed by the $d-3$ monomials (if any, appearing for $d\geqslant 4$) $x^1$,\ $\ldots$,\ $x^{d-3}$ 
of total degree~$\leqslant 1$ in the variables $x^i$; 
next is the degree\/-\/three product of the last three coordinates $x^{d-2}x^{d-1}x^d$, and the last two arguments consist of base variables raised to Fibonacci numbers.
(For instance, when the base dimension is $d=3$ with $(x,y,z)\in\BBR^3$, take $x^{F_{n}} y^{F_{n-1}} z^{F_{n-2}}=\prod_{i=1}^d (x^i)^{F_{n-i+1}}$ and $\prod_{i=1}^d (x^i)^{F_{n-i+2}}$ as the last two arguments.)
Then we claim that
  \begin{equation}\label{eq:FiboOverDim}
        W^{k=1}_d \Bigl(1,x^1,\ldots,x^{d-3}, x^{d-2}x^{d-1}x^d, \prod_{i=1}^d (x^i)^{F_{n-i+1}}, \prod_{i=1}^d (x^i)^{F_{n-i+2}} \Bigr) = (-1)^{n-d} \prod_{i=1}^d (x^i)^{F_{n-i+3}},
  \end{equation}
i.e.\ the result is a monomial whose base variables are raised to the \emph{next} Fibonacci numbers in each variable (compare the index shifts $+1,+2\to +3$), and the coefficient does not vanish.

Here we produce the sequence of monomials such that the degree in each base coordinate diverges to infinity exponentially fast (as the $n$th Fibonacci number, with $n\to+\infty$).
\end{ex}

\begin{proof}[Proof of equality~\textup{\eqref{eq:FiboOverDim}}]
  By \cite[Th.
\:7 on p.\,12]{FinDim} about the Wronskian of monomial arguments, the value 
of the Wronskian in~\eqref{eq:FiboOverDim} is the monomial whose degree in coordinate $x^i$ is $(1+F_{n-i+1}+F_{n-i+2})-1=F_{n-i+3}$ and whose coefficient is the (generalised Vandermonde) determinant
    \begin{equation*}
        \det\left|\begin{array}{c|ccc|ccc}
        1 & 1 & \cdots  &  1 & 1 & 1     & 1 \\ \hline
        0 & 1 & \cdots  &  0 & 0 & F_{n} & F_{n+1} \\ 
    \vdots& \vdots & \ddots  &\vdots &\vdots  &\vdots  &\vdots  \\ 
        0 & 0 & \cdots  &  1 & 0 & F_{n-d+4} & F_{n-d+5} \\ \hline
        0 & 0 & \cdots  &  0 & 1 & F_{n-d+3} & F_{n-d+4} \\ 
        0 & 0 & \cdots  &  0 & 1 & F_{n-d+2} & F_{n-d+3} \\ 
        0 & 0 & \cdots  &  0 & 1 & F_{n-d+1} & F_{n-d+2} 
        \end{array}\right|
        = 
        \begin{vmatrix}
            1 & F_{n-d+3} & F_{n-d+4} \\ 
            1 & F_{n-d+2} & F_{n-d+3} \\ 
            1 & F_{n-d+1} & F_{n-d+2}
        \end{vmatrix},
    \end{equation*}
which simplifies to Cassini's identity, $F_{n-d+1}^2-F_{n-d}\,F_{n-d+2}=(-1)^{n-d}$,
    as required.
\end{proof}

\begin{ex}[$d=1$, $k\geqslant 1$: $N$\nobreakdash-bonacci]\label{ex:Nbon}
    Let us return to base $\BBR^1\ni x$ and see if faster --\,than 
Fibonacci's\,-- growth of the monomial degrees is possible. Take an odd differential order $k\in \{1,3,5,\ldots\}$, so that the Wronskian of even arity $N=k+1$ becomes $W^{0,1,\ldots,N-1}=\Bone \wedge \partial_x\wedges \partial^{N-1}_x$. The $N$\nobreakdash-bonacci numbers $F_n^{(N)}$ are given by $F^{(N)}_0=\cdots=F^{(N)}_{N-2}=0$, $F^{(N)}_{N-1}=1$, and $F^{(N)}_n=F^{(N)}_{n-1}+\cdots + F^{(N)}_{n-N}$, i.e.\ summation is taken over the preceding $N$ numbers in the sequence. 
Then the Wronskian of $N$ monomials --\,whose degrees are $N$ successive $N$\nobreakdash-bonacci numbers plus the shift $N/2$\,-- is the monomial with the next $N$\nobreakdash-bonacci number in the degree,
\begin{equation}\label{EqNbonMonomNbonCoeff}
        W^{0,1,\ldots,N-1} \bigl( x^{F^{(N)}_{n}+N/2}, 
        \ldots,x^{F^{(N)}_{n+N}+N/2} \bigr) = 
        \prod_{0\leqslant i<j<N} \Bigl( \underbrace{F^{(N)}_{n+j} - F^{(N)}_{n+i} }_{>0} \Bigr) \cdot 
        x^{F^{(N)}_{n+N+1} + N/2};
\end{equation}
the monomial's coefficient does not vanish\footnote{When $n=N-1$, there is a repeated argument $x^{1+N/2}$, hence the Wronskian determinant vanishes.}
for $n\geqslant N$.    

    We thus produce a sequence of monomials whose degrees grow faster than the Fibonacci numbers; their growth is again exponential, but asymptotically slower than $2^n$; see the Proposition on p.~\pageref{prop:NbonGrowth} about their growth. (The case $2^n$ corresponds to the $\infty$-bonacci numbers, where summation is taken over \emph{all} previous terms.)
\end{ex}

\begin{resprob}\label{ResProb}
    Take the complete generalised Wronskian $W_d^k = \Bone\wedge \partial_\bx \wedges \partial^k_{\bx\hdots\bx}$ of differential order $k\geqslant 1$ 
over $\BBR^{d\geqslant 1} \ni \boldsymbol{x}$ as the $N$\nobreakdash-ary Lie bracket, $N=\tbinom{d+k}{d}$. Pick some set of initially taken polynomial arguments from $\BBR[x^1,\ldots,x^d]$, denoting their span by $\EuA_0$, and put ${\mathsf{D}}_0$ to be the highest total polynomial degree among them. Plug the polynomials from $\EuA_0$ into the Wronskian, obtaining (possibly new) polynomials, the span of which we denote by $\EuScript{B}_1$. Put $\EuA_1 \mathrel{{:}{=}} \Span(\EuA_0\cup \EuScript{B}_1)$. 
Now denote by ${\mathsf{D}}_1$ the highest total degree among the polynomials from $\EuA_1$. Repeat this process of iterating the $N$\nobreakdash-ary bracket using polynomials from $\EuA_1$, obtaining the (sub-)spaces of polynomials $\EuA_2$,\ $\EuA_3$,\ $\ldots$ and the highest total degrees ${\mathsf{D}}_2$,\ ${\mathsf{D}}_3$,\ $\ldots$ of polynomials among them; let us introduce the notation ${\mathsf{D}}_n={\mathsf{D}}_n^{(N)}$ to emphasise the degrees' dependence on the arity~$N$.

    The problem is to determine how fast the sequence $\{{\mathsf{D}}^{(N)}_n\}_{n\in\BBN}$ grows (if at all) as $n\to+\infty$. In particular, \emph{what is the fastest possible growth of $\{{\mathsf{D}}^{(N)}_n\}_{n\in\BBN}$ among all choices of the initial arguments,
$\EuA_0\subsetneq \BBR[x^1,\ldots,x^d]$\,}?
\end{resprob}

Our main result is 
an upper bound for the asymptotic growth of the (poly- or) monomials' degrees $\{{\mathsf{D}}^{(N)}_n\}_{n\in\BBN}$. 

\begin{theorNo}\label{thm:main}
    Let the base dimension $d=\dim \BBR^d\geqslant 1$ and differential order $k\geqslant 1$ of the complete generalised Wronskian $W_d^k$ be arbitrary; put $N=\tbinom{d+k}{d}$. For any choice of initial arguments for the Wronskian, the sequence of highest total degrees $\{{\mathsf{D}}^{(N)}_n\} 
    $ (and the highest degree in each variable) \emph{g\underline{rows }(\underline{if at all})\underline{ as}y\underline{m}p\underline{toticall}y\underline{ no faster than the $N$\nobreakdash-bonacci numbers}.} 
\end{theorNo}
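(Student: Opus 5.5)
The plan rests on one degree estimate for a single application of the bracket, followed by a comparison argument with the $N$\nobreakdash-bonacci recurrence. I read "iteration" as in Examples~\ref{ex:FiboBase}--\ref{ex:Nbon}: one application of $W_d^k$ per step, producing one new polynomial $g_n$ whose arguments are taken from $\Span\langle g_0,\ldots,g_{n-1}\rangle$. If instead all brackets of $\EuA_{n-1}$ are taken simultaneously, as in the Research problem, then $\mathsf{D}_n\leqslant N\mathsf{D}_{n-1}$ only gives growth $N^n$. So the theorem must be stated and proved for this sequential reading, which I would make explicit at the start.

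\emph{Step 1 (degree of one bracket).} Expand $W_d^k(f_1,\ldots,f_N)$ as a signed sum over bijections between the arguments and the rows $\dd^{|\tau|}/\dd\bx^\tau$, with $|\tau|\leqslant k$ as in footnote~\ref{FootDimJetK}. Each summand is a product $\prod_i \dd^{|\tau_i|}f_i/\dd\bx^{\tau_i}$, whose total degree is at most $\sum_i\deg f_i-c$. Here $c=\sum_{\ell=0}^{k}\ell\binom{\ell+d-1}{d-1}\geqslant0$. Hence $\deg W_d^k(f_1,\ldots,f_N)\leqslant\sum_i\deg f_i$. The same summand-by-summand argument applied to the degree in a single variable $x^j$ gives the per-variable statement.

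\emph{Step 2 (reduction to distinct generators).} The bracket is multilinear and totally antisymmetric, so it vanishes on linearly dependent tuples. Write $V_{n-1}=\Span\langle g_0,\ldots,g_{n-1}\rangle$, with $g_0,\ldots,g_{m-1}$ the initial arguments. By Gaussian elimination with respect to the filtration by total degree, choose a basis of $V_{n-1}$ adapted to that filtration. In this basis, the number of basis elements of degree $\leqslant t$ equals $\dim(V_{n-1}\cap\{\deg\leqslant t\})$, and this is at most the number of $g_i$ of degree $\leqslant t$.

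Now take linearly independent arguments $f_1,\ldots,f_N\in V_{n-1}$ and order them by degree. Counting dimensions of the filtration pieces shows that the $j$th largest degree among the $f_i$ is at most the $j$th largest degree among $g_0,\ldots,g_{n-1}$. Combined with Step~1, $\deg g_n$ is bounded by the sum of the $N$ largest degrees among $g_0,\ldots,g_{n-1}$.

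\emph{Step 3 (recursion and comparison).} Put $\mathsf{D}_n=\max_{i\leqslant n}\deg g_i$. This is nondecreasing, and each step adds only one new polynomial. So the $j$th largest degree among $g_0,\ldots,g_{n-1}$ is at most $\mathsf{D}_{n-j}$ for $n-j\geqslant m$, and at most $\mathsf{D}_0$-type constants otherwise. This yields
\[
\mathsf{D}_n\leqslant\mathsf{D}_{n-1}+\mathsf{D}_{n-2}+\cdots+\mathsf{D}_{n-N}\qquad(n\text{ large}).
\]
Choose $C$ with $\mathsf{D}_i\leqslant C\,F^{(N)}_{i+s}$ on a window of $N$ consecutive initial indices, where the shift $s$ makes all the $F^{(N)}$ involved positive. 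Induction on $n$ then gives $\mathsf{D}_n\leqslant C\,F^{(N)}_{n+s}$ for all $n$. Since $F^{(N)}_{n+s}/F^{(N)}_n\to\alpha_N^{s}$ by the Proposition on p.~\pageref{prop:NbonGrowth}, the ratio $\mathsf{D}_n/F^{(N)}_n$ stays bounded.

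The main obstacle is Step~2. Arguments may be arbitrary linear combinations, and several generators may share the same top degree. The point to verify carefully is that linear independence, via the filtration-adapted echelon basis, prevents reusing a single high-degree generator more than once. That is exactly what replaces the crude bound $N\mathsf{D}_{n-1}$ by the $N$\nobreakdash-bonacci window.
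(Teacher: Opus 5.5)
Your Step~2 fails as written, and it is the step that is supposed to replace $N\mathsf{D}_{n-1}$ by the $N$\nobreakdash-bonacci window.

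The auxiliary inequality $\dim\bigl(V_{n-1}\cap\{\deg\leqslant t\}\bigr)\leqslant\#\{i:\deg g_i\leqslant t\}$ is false for non-monomial generators. Take $g_0=x^2$, $g_1=x^2+x$: at $t=1$ the left side is $1$ and the right side is $0$. What does hold is the codimension bound $\dim V_{n-1}-\dim\bigl(V_{n-1}\cap\{\deg\leqslant t\}\bigr)\leqslant\#\{i:\deg g_i>t\}$.

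More seriously, the claim that for \emph{linearly independent} $f_1,\ldots,f_N\in V_{n-1}$ the $j$th largest $\deg f_i$ is at most the $j$th largest $\deg g_i$ is false, even for monomial generators. With $d=k=1$, $g_0=x^3$, $g_1=x$, the independent pair $f_1=x^3$, $f_2=x^3+x$ has degrees $(3,3)$ against $(3,1)$. Linear independence does not stop a top-degree generator from entering every argument. So Step~1 applied to the $f_i$ themselves gives only $\sum_i\deg f_i\leqslant N\mathsf{D}_{n-1}$. What actually removes the repeated top degree is antisymmetry: $W(x^3,x^3+x)=W(x^3,x)$.

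The repair is short and makes the echelon basis unnecessary. Write $f_i=\sum_j c_{ij}g_j$. Multilinearity and total antisymmetry (Cauchy--Binet) give $W_d^k(f_1,\ldots,f_N)=\sum_{j_1<\cdots<j_N}\det\bigl(c_{i j_s}\bigr)_{i,s}\,W_d^k(g_{j_1},\ldots,g_{j_N})$. Step~1 applied to each term, now with distinct indices, yields $\deg g_n\leqslant e_1+\cdots+e_N$, where $e_1\geqslant e_2\geqslant\cdots$ are the sorted degrees of $g_0,\ldots,g_{n-1}$. The same argument works for each $\deg_{x^j}$. Alternatively, replace the $f_i$ by a degree-adapted basis of $\Span\langle f_1,\ldots,f_N\rangle$, which changes $W_d^k$ by a nonzero scalar, and then use the codimension bound.

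With this fix, your Steps~1 and~3 give a complete proof by a different route from the paper's. The paper reduces to monomials and uses the exact degree formula for Wronskians of monomials. It then asserts that the fastest scheme is `use the newest output, drop the lowest-degree argument', and solves the resulting recurrence in closed form. Your comparison inequality $\mathsf{D}_n\leqslant\mathsf{D}_{n-1}+\cdots+\mathsf{D}_{n-N}$ covers every re-use scheme and arbitrary polynomial arguments at once, without any optimality claim. It yields $\limsup_{n}\mathsf{D}_n/F^{(N)}_n<\infty$, which is the form the conclusion should take.

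Your caveat about the reading is justified. Under the simultaneous reading of the Research problem, $d=k=1$ and $\EuA_0=\Span\langle x^2,x^3\rangle$ give $\EuA_n=\Span\langle x^2,\ldots,x^{2^n+2}\rangle$, i.e.\ $\mathsf{D}_n=2^n+2$, which outgrows the Fibonacci numbers. The paper's proof tacitly uses the sequential reading as well.
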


(The proof is given in Appendix~\ref{SecAppProofThm}.)

Now we can 
bound the growth in dimension of the subspaces $\EuA_0$,\ $\EuA_1$,\ $\ldots$; the growth in dimension, $\dim(\EuA_n)$, is Kirillov's problem~\cite{KirillovKontsevich} extended to the case of (polynomial) $N$\nobreakdash-ary Lie algebras (see~\cite{FinDim} for the answer in the base case --- for certain 
algebras that do not grow at all).

\begin{corNo}[towards Kirillov's problem]\label{corr:dimgrowth}
    For any dimension $d\geqslant 1$ and differential order $k\geqslant 1$, so that $N=\tbinom{d+k}{d}$, and any initially taken polynomial arguments for the Wronskian, the dimensions $\dim (\EuA_n)$ of the subspaces of polynomials created after $n>0$ iterations of the $N$\nobreakdash-ary bracket grow (if at all) no faster than the following asymptotic bound, as $n\to+\infty$:
\begin{equation}\label{EqDimBoundAsympt}
\log\dim(\EuA_n) \lesssim n\cdot\log (\alpha_N^d) + O(1) < n \cdot \log (2^d) + O(1),
\end{equation}
where $\alpha_N =\displaystyle\lim_{n\to+\infty} (F^{(N)}_{n+1} / F^{(N)}_n ) \in [\varphi,2)$ describes the asymptotic growth of the $N$\nobreakdash-bonacci numbers,
see the proposition on the next page. 
(Here $\varphi=(1+\sqrt{5})/2\approx 1.618\ldots$ is the golden ratio.)
\end{corNo}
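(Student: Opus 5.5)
The plan is to deduce the Corollary directly from the main Theorem (stated above, proved in Appendix~\ref{SecAppProofThm}) by counting monomials. Every polynomial in $\EuA_n$ has total degree at most ${\mathsf{D}}^{(N)}_n$, so $\EuA_n$ is a linear subspace of $\BBR_{\leqslant {\mathsf{D}}_n}[x^1,\ldots,x^d]$, the space of polynomials of total degree not exceeding ${\mathsf{D}}_n$. Its dimension is the number of monomials in $d$ variables of degree $\leqslant {\mathsf{D}}_n$. By the same binomial summation as in footnote~\ref{FootDimJetK}, this number is
\[
\dim(\EuA_n)\leqslant \binom{{\mathsf{D}}_n+d}{d}\leqslant \frac{({\mathsf{D}}_n+d)^d}{d!}\leqslant ({\mathsf{D}}_n+d)^d .
\]
So the whole task reduces to bounding ${\mathsf{D}}_n$ and then taking logarithms.

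First I will apply the Theorem. It says that ${\mathsf{D}}^{(N)}_n$ grows no faster than $F^{(N)}_n$, i.e.\ there is a constant $C>0$, depending on the initial arguments, with ${\mathsf{D}}_n\leqslant C\,F^{(N)}_n$ for all large $n$. Next I will invoke the proposition on $N$-bonacci growth (stated on the next page, re-proved in Appendix~\ref{SecAppReProof}). It gives $F^{(N)}_n\leqslant C'\alpha_N^n$, with $\alpha_N$ the dominant root of $t^N=t^{N-1}+\cdots+1$ and $\alpha_N\in[\varphi,2)$.

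Combining the two bounds gives ${\mathsf{D}}_n+d\leqslant C''\alpha_N^n$ for large $n$. One subcase needs care: if ${\mathsf{D}}_n$ stays bounded, then $\dim(\EuA_n)$ is bounded and the estimate is trivial, since $\alpha_N^n\geqslant 1$. Hence
\[
\log\dim(\EuA_n)\leqslant d\log({\mathsf{D}}_n+d)\leqslant d\,n\log\alpha_N + d\log C'' = n\log(\alpha_N^d)+O(1).
\]
The strict inequality with $n\log(2^d)$ in~\eqref{EqDimBoundAsympt} follows from $\alpha_N<2$, for $n$ large enough that $n\,d\log(2/\alpha_N)$ dominates the difference of the $O(1)$ terms.

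I expect no step to be a genuine obstacle, since all the depth sits in the Theorem itself. The one point to check is that the Theorem's conclusion, a finite $\limsup$ of ${\mathsf{D}}_n/F^{(N)}_n$, really yields a uniform constant $C$. Finitely many small $n$ can be absorbed into the constant, so this holds.
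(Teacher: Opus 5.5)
Your argument is correct and is essentially the paper's own proof: bound $\dim(\EuA_n)$ by the number $\binom{d+\mathsf{D}_n}{d}$ of monomials of total degree at most $\mathsf{D}_n$, insert the Theorem's $N$-bonacci bound on $\mathsf{D}_n$ together with $F^{(N)}_n=O(\alpha_N^n)$ from the Proposition, and take logarithms. Your explicit estimate $\binom{D+d}{d}\leqslant (D+d)^d/d!$ and your treatment of the $O(1)$ terms are just a tidier form of the paper's expansion $D^d/d!+O(D^{d-1})$.

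One caveat, which concerns the paper rather than your deduction. As in the paper, all the content is delegated to the Theorem, and its proof counts one new monomial per iteration. For the round-by-round spaces $\EuA_n$ of the Research Problem the Theorem fails: for $d=k=1$ and $\EuA_0=\Span\langle x^2,x^3\rangle$ one gets $\EuA_n=\Span\langle x^2,\ldots,x^{2^n+2}\rangle$, whose dimension $2^n+1$ outgrows $\varphi^n$. So the deduction, yours and the paper's alike, is sound only for the one-bracket-per-step notion of iteration under which the Theorem is argued.
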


\begin{proof}[Proof (of the Corollary).]
  The dimension of the space of polynomials in $d$ variables up to degree $m$ is given
(see footnote~\ref{FootDimJetK} on p.\,\pageref{FootDimJetK})  
by the binomial coefficient 
$\tbinom{d+m}{d} = m^d / d! + O\bigl(m^{d-1}\bigr)$. 
As the degrees of all monomials created after $n\in\BBN$ iterations of the Wronskian $N$\nobreakdash-ary bracket do not exceed ${\mathsf{D}}^{(N)}_n$, 
it follows that 
\[
\dim (\EuA_n)\leqslant \tbinom{d+{\mathsf{D}}^{(N)}_n}{d} 
 = \bigl( {\mathsf{D}}^{(N)}_n \bigr)^d \bigr/ d! 
  + O \Bigl( \bigl( {\mathsf{D}}^{(N)}_n \bigr)^{d-1} \Bigr).
\]
As the highest degrees ${\mathsf{D}}^{(N)}_n$ grow asymptotically no faster than the $N$\nobreakdash-bonacci numbers,  
$\log {\mathsf{D}}^{(N)}_n \lesssim \log\alpha_N^n + O(1)$ as $n\to +\infty$,
we conclude 
that 
\[
\log \dim(\EuA_n) \lesssim d\cdot\log \bigl({\mathsf{D}}^{(N)}_n\bigr) + O(1)
\lesssim n\cdot \log (\alpha_N^d) + O(1) < n\cdot \log (2^d) + O(1), 
\]
as claimed.\footnote{\label{FootNeedExpData}
At this point, the next step along this line of research would be a sufficient number of (computationally costly) examples revealing the actual rapidity of growth for the dimensions $\dim \bigl( \EuA_{n\geqslant 1} \bigr)$, starting from generic $(M\geqslant N)$-\/tuples of initially taken polynomials $a_i\in\EuA_0$.
In particular, the upper bound 
$\dim \bigl( \EuA_{n} \bigr) \leqslant 
  \const(d) \cdot \exp\bigl( \const(d)\cdot n^1 \bigr)$, de facto available from~\eqref{EqDimBoundAsympt}, might far exceed the actual asymptotic growth rates
--- due to the integer power $n^1$ under the exponent (in contrast with the fractional power $n^{2/3}$ in the seminal work~\cite{KirillovKontsevich} over the base dimension $d=1$). From the proof of asymptotic bound~\eqref{EqDimBoundAsympt} we now see that the dimension $d\geqslant 1$ shows up explicitly in either constant around the exponential function of the upper bound.}
\end{proof}

To provide more details about the limit values $\alpha_N$,
let us recall some basic facts about linear integer recurrences with constant coefficients. 
Define the integer sequence \mbox{$y_n\in \BBZ$} recursively by $y_n=a_1 y_{n-1} + \cdots a_N y_{n-N} + b$, where $a_1,\ldots,a_N,b\in \BBZ$ are constants, and the sequence begins from some initial datum $y^0_1,\ldots,y^0_N$; the sequence is homogenised, $x_n=a_1 x_{n-1}+\cdots+ a_N x_{n-N}$, by setting $x_n \mathrel{{:}{=}} y_n-y^*$ if $y^* \mathrel{{:}{=}} b/(1-a_1-\cdots -a_N)$ exists. In particular, the sequence $y_n=y_{n-1}+\cdots +y_{n-N}+b$ can be homogenised, yielding the $N$\nobreakdash-bonacci recurrence relation $x_n=x_{n-1}+\cdots +x_{n-N}$; its closed\/-\/form solution is $x_n=c_1 \lambda_1^n + \cdots +c_N \lambda_N^n$, where $c_i\in \BBC$ are constants (that depend on the initial data) and each $\lambda_i\in \BBC$ is a root of the characteristic polynomial
\begin{equation}\label{eq:NbonPolynomial}
    p(\lambda) = \lambda^N - \lambda^{N-1} - \cdots - \lambda - 1.
\end{equation}
The task is to determine the roots of $p(\lambda)$. We claim 
that $N-1$ roots lie within the complex unit disk, whereas the $N$th root, denoted by $\alpha_N$, lies in the interval $(1,2)\subsetneq \BBR$, 
see Fig.~\ref{fig:rootlowerbounds} on p.~\pageref{fig:rootlowerbounds} below. 
If the coefficient $c_N \mathrel{{=}{:}} C$ preceding $\alpha_N$ (and determined by the initial datum) is non-zero, the sequence $\{x_n\}$ is, asymptotically, $x_n\simeq
C\alpha_N^n$, which diverges exponentially to $+\infty$ (if $C>0$) as $n\to+\infty$.

\begin{propNo}[\protect{\cite[Lemma~3.6]{Wolfram98}}: growth of $N$\nobreakdash-bonacci numbers]\label{prop:NbonGrowth}
    The $N$\nobreakdash-bonacci numbers~$F^{(N)}_n$, given by the recurrence 
$F^{(N)}_n = F^{(N)}_{n-1} + \cdots + F^{(N)}_{n-N}$ from     
$F^{(N)}_0=\cdots = F^{(N)}_{N-2}=0$ and $F^{(N)}_{N-1}=1$,
grow asymptotically as $F^{(N)}_n \simeq C\alpha_N^n$, where
Wolfram's lower bound is $2(1-2^{-N}) < \alpha_N < 2$ and $C>0$.
    (All other roots $\lambda_i$ of 
characteristic polynomial~\eqref{eq:NbonPolynomial} satisfy the bounds $3^{-1/N}<|\lambda_i|<1$, 
i.e.\ they lie within the annulus close to the boundary of the complex unit disk.)

\noindent$\bullet$\quad 
The numbers $\alpha_N$ are monotonically increasing w.r.t. $N$ as $\varphi = \alpha_2 < \alpha_3 < \cdots < \alpha_\infty = 2$. 
\end{propNo}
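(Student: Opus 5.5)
The plan is to work throughout with the auxiliary polynomial $q_N(\lambda)\mathrel{{:}{=}}(\lambda-1)\,p(\lambda)=\lambda^{N+1}-2\lambda^N+1$. Away from $\lambda=1$ it has the same roots as the characteristic polynomial~\eqref{eq:NbonPolynomial}, and every root $\lambda\neq1$ of $p$ satisfies
\[
\lambda^N(2-\lambda)=1.
\]
All the root estimates will be read off from this identity. Once the roots are located, the asymptotics $F^{(N)}_n\simeq C\alpha_N^n$ follows from the closed\/-\/form solution $x_n=\sum_i c_i\lambda_i^n$ recalled just before the Proposition.

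\emph{Step 1: the real root and its bounds.} Since $p(1)=1-N<0$ and $p(2)=2^N-(2^N-1)=1>0$, there is a root $\alpha_N\in(1,2)$. It is the only positive root, by Descartes' rule of signs (one sign change in $p$). For the bounds, consider $f(x)=x^N(2-x)$ on $[1,2]$. This function increases up to $x^*=2N/(N+1)$ and then decreases, with $f(1)=1$ and $f(2)=0$. Hence the equation $f=1$ has exactly two solutions on $[1,2]$, namely $x=1$ and $\alpha_N$, and $\alpha_N$ lies on the decreasing branch, $\alpha_N>x^*$. The upper bound $\alpha_N<2$ is then immediate. For Wolfram's lower bound, set $x_0=2-2^{1-N}$; the inequality $2^{N}\geqslant N+1$ gives $x_0\geqslant x^*$, so $x_0$ is also on the decreasing branch. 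Bernoulli's inequality gives
\[
f(x_0)=2(1-2^{-N})^N>2(1-N2^{-N})\geqslant1,
\]
hence $f(x_0)>f(\alpha_N)$ and so $x_0<\alpha_N$. In addition, $q_N'(\alpha_N)=\alpha_N^{N-1}\bigl((N+1)\alpha_N-2N\bigr)\neq0$ because $\alpha_N>x^*$, so $\alpha_N$ is a simple root.

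\emph{Step 2: the other roots lie in the annulus.} This is the step I expect to be the main obstacle, because Rouch\'e's theorem is degenerate on $|\lambda|=1$. On that circle $|\lambda^{N+1}+1|\leqslant2=|2\lambda^N|$, with equality only at $\lambda=1$. I will apply Rouch\'e on the circle $|\lambda|=1+\veps$ for small $\veps>0$, comparing $-2\lambda^N$ with $\lambda^{N+1}+1$. This shows that $q_N$ has exactly $N$ roots in $|\lambda|\leqslant1$ (one of them being $\lambda=1$), so $p$ has $N-1$ roots there and $\alpha_N$ is the remaining one. To exclude roots with $|\lambda|=1$ exactly: equality $|\lambda^{N+1}+1|=2$ forces $\lambda^{N+1}=1$, and then $q_N(\lambda)=0$ forces $\lambda^N=1$, hence $\lambda=1$, which is not a root of $p$ because $p(1)=1-N\neq0$. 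For the lower bound, take a root $\lambda\neq\alpha_N$; then $|\lambda|<1$ implies $|2-\lambda|<3$, and the identity gives $|\lambda|^N=1/|2-\lambda|>1/3$, that is, $|\lambda|>3^{-1/N}$.

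\emph{Step 3: the constant $C$ and monotonicity in $N$.} By Steps~1 and~2, $F^{(N)}_n=C\alpha_N^n+o(1)$, since every $\lambda_i\neq\alpha_N$ has $|\lambda_i|<1$. The $F^{(N)}_n$ are nonnegative integers that tend to $+\infty$ (they are nondecreasing from $n=N$ on, and $F^{(N)}_n\geqslant 2F^{(N)}_{n-N}$), so $C\neq0$, and then $C>0$ because the sequence is positive. For monotonicity, note $p_{N+1}(\lambda)=\lambda\,p_N(\lambda)-1$. Hence $p_{N+1}(\alpha_N)=-1<0$, while $p_{N+1}(2)=1>0$, so the unique positive root satisfies $\alpha_{N+1}>\alpha_N$. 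Finally, $\alpha_2=\varphi$ is the root of $\lambda^2-\lambda-1$, and the two\/-\/sided bound $2-2^{1-N}<\alpha_N<2$ gives $\alpha_N\to2=\alpha_\infty$.
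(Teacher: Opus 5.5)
Your proof is correct, and it overlaps with the paper's re-proof only in the Rouch\'e step.

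\textbf{The common step.} Your Step~2 is the paper's first part. Both apply Rouch\'e to $(z^{N+1}+1)+(-2z^N)$ on $|z|=r$ with $r\to1^{+}$, then exclude roots on $|z|=1$. The paper does this via $|2-e^{i\theta}|=1\Rightarrow\theta=0$; you use the equality case of the triangle inequality. You should write out that the strict inequality needed on $|z|=1+\veps$ is exactly $q_N(1+\veps)=1-f(1+\veps)<0$. Your Step~1 already gives this whenever $1+\veps<\alpha_N$.

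\textbf{The real root.} Here the routes differ. The paper locates the minimum of $P=1-f$ at $x_2=2N/(N+1)$. It then uses convexity on $(x_2,2]$ and the secant through $(x_2,P(x_2))$ and $(2,1)$. This yields only $\alpha_N>2-x_2^{-N}$, i.e.\ $\veps_N<\tfrac12(1+1/N)^N\,2^{-N}$. That is weaker than the stated $2(1-2^{-N})<\alpha_N$ for every $N\geqslant2$, by a factor tending to $e/2$; for the stated bound the paper simply cites Wolfram. The secant argument has the merit of extracting a bound from the shape of the graph without knowing the target in advance. Your evaluation of $f$ at the pre-chosen point $x_0=2-2^{1-N}$ is shorter and proves Wolfram's bound itself: Bernoulli's inequality gives $f(x_0)>1$, and $2^N\geqslant N+1$ places $x_0$ on the decreasing branch.

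\textbf{What you add.} The paper's re-proof never argues three parts of the statement. It omits the annulus bound $|\lambda_i|>3^{-1/N}$. It obtains only $\alpha_N\to2$, not the strict monotonicity $\alpha_N<\alpha_{N+1}$. It merely asserts $C>0$. Your identity $|\lambda|^N|2-\lambda|=1$ gives the annulus bound, the relation $p_{N+1}(\lambda)=\lambda\,p_N(\lambda)-1$ gives monotonicity, and the divergence/positivity argument gives $C>0$. So your proposal establishes strictly more of the statement than the paper's own argument.

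One cosmetic point: the closed form $\sum_i c_i\lambda_i^n$ presumes simple roots. This does hold, since $q_N'$ vanishes only at $0$ and $x^*$, and neither is a root of $q_N$. In any case, repeated roots inside the open unit disk would still contribute only $o(1)$.
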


In Appendix~\ref{SecAppReProof} we re\/-\/prove this fact using Rouch\'e's theorem~\cite[\S\,10.43\:(b)]{RudinRC} from complex analysis, obtaining a slightly smaller than Wolfram's, but asymptotically equal (as~\mbox{$N\to \infty$}) lower bound for $\alpha_N = 2 (1-\veps_N)$: 
our bound is $\varepsilon_N < \delta_N \mathrel{{:}{=}} \tfrac{1}{2} (1+1/N)^N 2^{-N}$, which yields the limit $\lim_{N\to+\infty} (\delta_N/2^{-N})=e/2\approx 1.359$
that compares our bound with Wolfram's. Our approach is very elementary and geometric; it could be refined further to find a sharper 
bound.

\begin{rem}[Pisot\/--\/Vijayaraghavan number]\label{rem:PVnumber}
    The number $\alpha_N$ is called a Pisot\/--\/Vijayaraghavan (PV) number as it is the only real root $\alpha_N>1$ of the monic irreducible polynomial $p(\lambda)\in\BBZ[\lambda]$ whose all other roots lie within the complex unit disk.
It is known that as $n\to+\infty$, the powers $\alpha_N^n$ of the PV numbers are `almost integers' whose distance to the nearest integer tends to~$0$ as fast as~$\exp(-n)$. So, to get (a precise approximation of) the integer $F_n^{(N)} = C \alpha_N^n + (\exp\text{small terms})$, it suffices to know the constants $C$ and $\alpha_N$ and the index $n\in\BBN$. Conversely, for large $n$ (hence $F_n^{(N)} \gg 1$), one easily gets (a good approximation of $C$ and then) the index $n$ for a given $N$-bonacci number~$F_n^{(N)}$.\\[0.5pt]
\centerline{\rule{3.5in}{0.7pt}}
\end{rem}

\begin{rem}[upper bound satisfied]\label{rem:upperBoundSatisfied}
    Notice that in Examples~\ref{ex:FiboBase} and~\ref{ex:Nbon} -- corresponding to the Wronskians over $\BBR^1$ of odd differential order $k$ -- we have attained the upper bound for the growth in degree: the monomials' degrees \emph{are} the $N$\nobreakdash-bonacci numbers, and their coefficients never vanish. 
Simultaneously, it is an \textbf{open problem} whether the bound can be attained --\,with nonzero coefficient\,-- over higher base dimension $d=\dim \BBR^d > 1$ and any differential order $k\geqslant 1$. 
(In Example~\ref{ex:FiboOverDim} the coefficients are seen to be nonzero
both in~\eqref{EqD2CoeffFiboPlus} over $d=2$ and in~\eqref{eq:FiboOverDim} over $d\geqslant 3$, yet these were specific to the differential order $k=1$ in the Wronskian bracket~$W_d^k$.)
\end{rem}

\begin{ex}[computer algebra experiments]\label{ex:computerAlgebra}
  Using the computer algebra (\texttt{SymPy}) script in Appendix~C of~\cite{FinDim} --\,for calculating the complete generalised Wronskian determinants of monomial arguments,\,-- we 
made 32 experiments over base dimensions $d=1,2,3,4$ and differential orders $k=1,2,3$ by choosing an initial $N$-tuple of monomial arguments and iterating the bracket: re-inserting the newly-created monomials as arguments;
these computations are reported in the ancillary 
file
.\footnote{\label{FootFormatData}
The format of each entry in the output is this: the base dimension $d\geqslant 1$, the differential order $k\geqslant 1$, the arity $N=\binom{d+k}{d}$ of the complete generalised Wronskian; 
after the $N$-tuple of initially taken monomial ar\-gu\-ments, 
we describe the scheme: which of the arguments are fixed and how the newly produced monomials are re\/-\/used.
     In this setting, we write the starting 50 coefficients of the monomials which are produced by ite\-ra\-ting the Wronskian~$W_d^k$. We report the overall number of ite\-ra\-ti\-ons actually run in the example at hand and finally, the --\,sometimes, huge\,-- coefficient obtained in the last iteration.
(It is the threshold of circa 10\,000 iterations on or near which the coefficients get so big that the software hits the maximal operationally admissible num\-ber of digits in a decimal number. This is why the actual number of reported iterations is 
less than~10\,000.)}

Fix the base dimension $d$ of $\BBR^d\ni \bx=(x^1,\ldots,x^d)$ and differential order $k$ of the Wronskian determinant $W_d^k$. Take some $N=\tbinom{d+k}{d}$\nobreakdash-tuple $(m_1(\bx),\ldots,m_N(\bx))$ of monomial arguments $m_n(\bx)\in\BBR[x^1,\ldots,x^d]$ and denote by $b_n \mathrel{{:}{=}}\deg_{i}(m_n)$
their degrees in some coordinate~$x^i$; set up the scheme of iteration for the indices $n\geqslant N+1$ by $m_{n}(\bx) \mathrel{{:}{=}} W_d^k(m_{n-N},\ldots,m_{n-1})$. By~\cite[Theorem~7 on p.~12]{FinDim}, the monomials' degree $b_n$ in each base variable satisfies the linear recurrence relation $b_n=b_{n-1}+\cdots+b_{n-N}-kN/(d+1)$, which, upon homogenisation, becomes the recurrence relation for the $N$\nobreakdash-bonacci numbers. The monomials' coefficients satisfy their own (different, if any) recurrence relation.

    For instance, deploy 
the Wronskian $W_{d=2}^{k=1}=\Bone\wedge \partial/\partial x \wedge \partial/\partial y$ of first order over \mbox{$\BBR^2\ni (x,y)$} as the ternary ($N=3$) bracket; take as the three arguments the (monic) monomials $x^{a_n}y^{a_{n+1}}$, $x^{a_{n+1}}y^{a_{n+2}}$, and $x^{a_{n+2}}y^{a_{n+3}}$, where the integers $a_n$ are given by the ternary recurrence relation $a_n=a_{n-1}+a_{n-2}+a_{n-3}-1$ starting from a suitably chosen initial datum $(a_1,a_2,a_3)$. The Wronskian of these three monomials is the monomial $x^{a_{n+3}}y^{a_{n+4}}$ with coefficient $c_n$.
We verified using computer algebra that the coefficients $\{c_n\}_{n\in\BBN}$ satisfy the recurrence relation $c_{n}=-c_{n-1}-c_{n-2}+c_{n-3}$, whence the integers $c_n$ oscillate between large positive and negative values. It is unknown whether an initial datum $(a_1,a_2,a_3)$ exists such that the coefficients $c_n$ never vanish. 
We found many choices of initial data such that $c_n\neq 0$ up to $n\approx 3\,000$.
\end{ex}

\begin{rem}[Skolem\/--\/Pisot problem]\label{rem:Skolem}
   Given a linear recurrence relation for $z_n\in \BBZ$ and an initial datum, the Skolem\/--\/Pisot problem is to determine 
whether the sequence of integers $\{z_n\}_{n\in\BBN}$ ever hits zero: $z_n=0$. 
If zeros exist, it is known (the Skolem\/--\/Mahler\/--\/Lech Theorem, see~\cite{SkolemNP}
) that the set of indices $n\in \BBN$ for which $z_n=0$ consists of the union of a (possibly empty) finite set with a set of arithmetic progressions. In general, it is NP-hard to decide whether the sequence $\{z_n\}_{n\in\BBN}$ will ever hit a zero~\cite{SkolemNP}.
\end{rem}

\begin{conjectureNo}\label{conj:attainibility}
    Take the Wronskian $W_d^k$ of any differential order $k\geqslant 1$ over any base dimension $d=\dim \BBR^d\geqslant 1$ as the $N=\tbinom{d+k}{d}$\nobreakdash-ary bracket on the space of polynomials $\BBR[x^1,\ldots,x^d]$. 
We conjecture that there exists an $N$\nobreakdash-tuple of monomials $(m_1(\bx),\ldots,m_N(\bx))\in \BBR[x^1,\ldots,x^d]$ such that the scheme 
$m_{n}(\bx) \mathrel{{:}{=}} W_d^k(m_{n-N},\ldots,m_{n-1})$
to iterate 
the bracket over $n\in\BBN$ yields the monomials $m_n(\bx)$ whose degrees (total or in some, or in every variable) grow asymptotically as fast as the $N$\nobreakdash-bonacci numbers (by construction) and \emph{whose coefficients are never zero: $m_n(\bx)\not\equiv 0$ for all $n\in \BBN$.}
\end{conjectureNo}

\subsection*{Conclusion}
The asymptotic bound (see our theorem on p.~\pageref{thm:main}) of polynomial degrees that are obtained by iterating the Wronskian~$W_d^k$ as the $N=\binom{d+k}{d}$-ary bracket on $\BBR[x^1,\ldots,x^d]$ implies the asymptotic bound in Eq.~\eqref{EqDimBoundAsympt} for the dimensions of vector subspaces $\EuA_n\subsetneq \BBR[x^1$, $\ldots$, $x^d]$ spanned after the $n$th iteration.
One would expect that, whatever be the schemes to re\/-\/use the newly created monomials, far not all of the lower\/-\/degree monomials --\,within the degree bound\,-- will effectively be produced and available after the $n$th step (if ever at all); if so, the actual dimensions of subspaces $\EuA_n$ will tend to be smaller.
We pose the problem of improving --\,i.e.\ lowering\,-- the upper bound upon the growth rate for the subspace dimensions $\dim\EuA_n$ in the (strongly homotopy Lie) algebras of polynomials on~$\BBR^d$.

Our present choice to first study the polynomials --\,as objects' coefficients\,-- made the research problem clearly distinct from Kirillov's original formulation in~\cite{KirillovKontsevich} for generic vector fields with smooth coefficients on the line~$\BBR$.
Unlike the generic tuples of initially taken functions $f_1$,\ $\ldots$,\ $f_{r\geqslant N} \in C^\infty(\BBR^d)$ which, by definition, are not constrained by any differential\/-\/algebraic relations,\footnote{\label{FootJacobiNotConstrain}
Each Jacobi\/(-type) identity of the (strongly homotopy) Lie algebra constrains the structure, with respect to which the identities are quadratic (see~\cite{PRG25,Yer25} and many references therein), yet none of the Jacobi identities restricts the set of its arguments.
For example, see~\cite{ForKac,PRG25} for the Jacobi identities which either of the ternary brackets in~\eqref{EqExTernaryD1D2} satisfies for \emph{arbitrary} 5-tuples of smooth functions on the respective base space~$\BBR^d$.} 
the polynomials in~$\EuA_0$ have their degrees bounded 
uniformly by~${\mathsf{D}}_0^{(N)}$. (Naturally, all sufficiently high order derivatives of every polynomial function vanish identically, to begin with.)
The risk to encounter a slower rapidity of the degree growth (if it does, and apart from reproduction of the earlier reached monomials) is, in the case of $\BBR[x^1,\ldots,x^d]\subsetneq C^\infty(\BBR^d)$, due to the abrupt termination of those monomial sequences which hit the zero coefficient.
The problem thus doubles, now incorporating Kirillov's question about the dimensions $\dim\EuA_n$ and the Skolem\/--\/Pisot problem about the coefficients
(of the monomials that span~$\EuA_n$).
Our conjecture, see above, means that the growth does persist for generic 
tuples of initially taken 
polynomials on~$\BBR^d$.
It would be interesting to compare the observed growth rates for the case of polynomials and their $N$-ary brackets $W_{d\geqslant 1}^{k\geqslant 1}$ with the intermediate growth rate for the case of smooth functions as vector field coefficients and their binary Lie bracket~$W_{1}^{1}$ in~\cite{KirillovKontsevich}.

{\small\subsubsection*{Acknowledgements} 
The first author thanks the Center for Information Technology of the University of Groningen for access to the High Performance Computing cluster, H\'abr\'ok.
The second author thanks M.\,Kontsevich for suggesting to extend the research problem in~\cite{KirillovKontsevich} from binary Lie brackets of vector fields to the $N$-ary Lie\/-\/type brackets given on (the algebra of coefficients) $C^\infty(\BBR^d) \supsetneq \BBR[x^1,\ldots,x^d]$ by the generalised Wronskians $W_{d\geqslant 1}^{k\geqslant 1}$ of differential order~$k$, and for stimulating discussions at the~$\smash{\text{IH\'ES}}$.

}

\appendix
\section{Proof of the main Theorem}\label{SecAppProofThm}
\begin{proof}[Proof (of the main Theorem)]\label{pf:mainThm}
    It suffices, by multilinearity, to assume that 
the set of initial arguments consists of monomials. 
By~\cite[Theorem~7 on p.~12]{FinDim}, the Wronskian of monomial arguments $p_1(\bx),\ldots,p_N(\bx)$ is the monomial $p_1\cdots p_N/\prod_{i=1}^d (x^i)^{kN/(d+1)}$ preceded by the constant given by the generalised Vandermonde determinant of the monomials' degrees in each variable. We find an upper bound for the degree in each variable (and hence the total degree) by presently assuming that this constant never vanishes.

Iterate the Wronskian with the non\/-\/vanishing constant $n$ times and consider a monomial $q_n(\bx)$ with the highest total degree ${\mathsf{D}}_n^{(N)}=\deg q_n(\bx)$; it was produced by some sequence of re-used monomial arguments. If the newly\/-\/created argument was not used immediately 
in the next iteration, the growth was necessarily not faster 
than maximal. Let us consider which of the old arguments to drop in place of the new one; by the mechanism of degree growth, dropping an 
argument with the smallest degree results in the monomial with the highest degree. The degrees $a_n$ in each base variable of $q_n$ satisfy the recurrence $a_n=a_{n-1}+\cdots+a_{n-N}-kN/(d+1)$. By the properties of linear recurrences, after homogenisation, the closed form solution is $a_n=a^* + C\alpha_N^n + \sum_{j=1}^{N-1} c_j\lambda_j^n$, where $\alpha_N>1$ and $a^*,C\in \BBR$, $c_j\in \BBC$ are constants, and where all the values $\lambda_j$ lie in the complex unit disk: $|\lambda_j|<1$. (See the Proposition on p.~\pageref{prop:NbonGrowth} and the accompanying discussion%
.)

Hence 
${\mathsf{D}}_n^{(N)}=a_n^{(1)}+\cdots +a_n^{(d)}=(C^{(1)}+\cdots+C^{(d)})\alpha_N^n + d\cdot a^*+\sum_{i,j}c_j^{(i)}\lambda_j^n$, where $C^{(i)}$ is the constant $C$ for the recurrence of degrees $a_n$ in base variable $x^i$. Therefore, assuming for the uppper bound that an initial condition exists with $C^{(i)}\neq 0$ for all $i$, we have $\lim_{n\to+\infty} \bigl( {\mathsf{D}}_n^{(N)}/F_n^{(N)}\bigr)=C^{(1)}+\cdots+C^{(d)}<\infty$, completing the proof.
\end{proof}

\section{Re-proof of the Proposition and comparison of the bounds}\label{SecAppReProof}
\begin{proof}[Re-proof (of the Proposition).]\label{proof:Prop}
    The characteristic polynomial $p(z)$ of the $N$\nobreakdash-bonacci recurrence is $p(z)=z^N-z^{N-1}-\cdots-z-1$ (cf.\ Eq.~\eqref{eq:NbonPolynomial}).
Let us analyse where its roots are located in the complex plane.
(For example, the numerical values of the roots of $p(z)$ and their lower bounds are shown in Fig.~\ref{fig:rootlowerbounds} for $N=2$ (Fibonacci), $N=3$ (tribonacci), $N=6$, and $N=11$.)
\begin{figure}[htb]
    \centering
    \begin{tikzpicture}[scale=3]
    \draw[step=0.25, gray!20, thin] (-1.1,-1.1) grid (2.35,1.1);
    \draw[->] (-1.15,0) -- (2.25,0) node[right] {$\mathrm{Re}(z)$};
    \draw[->] (0,-1.15) -- (0,1.15) node[above] {$\mathrm{Im}(z)$};

    \draw[thick, dashed] (0,0) circle (1);
    
    \draw[red] (0,0) circle (0.904951576);
    \draw[Green4] (0,0) circle (0.832683178);
    \draw[blue] (0,0) circle (0.693361274);
    \draw[Purple3] (0,0) circle (0.577350269);

    \draw[red] (1.999023438,-0.5) -- (1.999023438,0.5);
    \draw[Green4] (1.96875,-0.5) -- (1.96875,0.5);
    \draw[blue] (1.75,-0.5) -- (1.75,0.5);
    \draw[Purple3] (1.5,-0.5) -- (1.5,0.5);

    \foreach \x/\y/\lbl in {
        1.99951/0/$\alpha_{11}$
    }{
        \filldraw[red] (\x,\y) circle (1.1pt);
        \node[above right, red, font=\footnotesize] at (\x,\y) {\lbl};
    }
    \foreach \x/\y/\lbl in {
        -0.873392/0.249013/$z_1$,
        -0.614054/0.676538/$z_1$,
        -0.164746/0.910538/$z_1$,
        0.353942/0.876137/$z_1$,
        0.798495/0.559020/$z_1$,
        -0.873392/-0.249013/$z_1$,
        -0.614054/-0.676538/$z_1$,
        -0.164746/-0.910538/$z_1$,
        0.353942/-0.876137/$z_1$,
        0.798495/-0.559020/$z_1$
    }{
        \filldraw[red] (\x,\y) circle (1.1pt);
    }

    \foreach \x/\y/\lbl in {
        1.9835828434243263303856293/0/$\alpha_{6}$
    }{
        \node[regular polygon, regular polygon sides=4, fill=Green4, rotate=45, scale=0.6] at (\x,\y) {};
        \node[above, Green4, font=\footnotesize] at (\x-0.06,\y) {\lbl};
    }
    \foreach \x/\y/\lbl in {
        -0.840309/0/$z_1$,
        -0.461929/0.719144/$z_1$,
        -0.461929/-0.719144/$z_1$,
        0.390292/0.817862/$z_1$,
        0.390292/-0.817862/$z_1$
    }{
        \node[regular polygon, regular polygon sides=4, fill=Green4, rotate=45, scale=0.6] at (\x,\y) {};
    }

    \foreach \x/\y/\lbl in {
        1.83928675521416/0/$\alpha_{3}$
    }{
        \node[regular polygon, regular polygon sides=3, fill=blue, scale=0.5] at (\x,\y) {};
        \node[below, blue, font=\footnotesize] at (\x,\y-0.02) {\lbl};
    }
    \foreach \x/\y/\lbl in {
        -0.41964/0.60629/$z_1$,
        -0.41964/-0.60629/$z_1$
    }{
        \node[regular polygon, regular polygon sides=3, fill=blue, scale=0.5] at (\x,\y) {};
    }

    \foreach \x/\y/\lbl in {
        1.618034/0/$\varphi$
    }{
        \node[star, star points=5, fill=Purple3,scale=0.6] at (\x,\y) {};
        \node[above, Purple3, font=\footnotesize] at (\x,\y) {\lbl};
    }
    \foreach \x/\y/\lbl in {
        -0.618034/0/$z_1$
    }{
        \node[star, star points=5, fill=Purple3,scale=0.6] at (\x,\y) {};
    }

    \foreach \x in {-1,-0.5,0.5,1,1.5,2}
    \draw (\x,0.02) -- (\x,-0.02) node[below, font=\tiny] {$\x$};
    \foreach \y in {-1,-0.5,0.5,1}
    \draw (0.02,\y) -- (-0.02,\y) node[left, font=\tiny] {$\y$};
\end{tikzpicture}
    \vspace{-0.8cm}
    \caption{Real and complex roots of the characteristic polynomial for the $N$\nobreakdash-bonacci numbers $p(z)=z^N-z^{N-1}-\cdots-z-1$, plotted for the values $N=2$ (Fibonacci numbers, golden ratio \textcolor{Purple3}{$\varphi\approx 1.61$}, as \textcolor{Purple3}{\textbf{purple stars}}), $N=3$ (tribonacci numbers, \textcolor{blue}{$\alpha_3\approx 1.84$}, as \textcolor{blue}{\textbf{blue triangles}}), $N=6$ (\textcolor{Green4}{$\alpha_6\approx 1.984$}, as \textcolor{Green4}{\textbf{green rhombi}}), and $N=11$ (\textcolor{red}{$\alpha_{11}\approx 1.9995$}, as \textcolor{red}{\textbf{red circles}}). Shown as circles and straight lines are the lower bounds $\ell_N=3^{-1/N}$ and $L_N=2(1-2^{-N})$ for the complex modulus of the roots $\lambda_1$,\ $\ldots$,\ $\lambda_{N-1}$ in the complex unit disk and the positive real root $\alpha_N$, respectively; the lower bounds for the complex roots' moduli are \textcolor{Purple3}{$\ell_2\approx0.577$}, \textcolor{blue}{$\ell_3\approx0.693$}, \textcolor{Green4}{$\ell_6\approx0.832$}, and \textcolor{red}{$\ell_{11}\approx0.904$}, whereas for the positive real root the lower bounds are: \textcolor{Purple3}{$L_2=1.5$}, \textcolor{blue}{$L_3=1.75$}, \textcolor{Green4}{$L_6\approx1.968$}, and \textcolor{red}{$L_{11}\approx1.99902$}. }
    \label{fig:rootlowerbounds}
\end{figure}%
This reasoning consists of two parts.    
    
\smallskip
\textbf{1.}\quad   
To demonstrate that $N-1$ roots of $p(z)$ lie within the complex unit disk,
we recall the argument 
from~\cite{MathOverflow}.  
Rewrite the polynomial $P(z) \mathrel{{:}{=}} (1-z) p(z)=z^{N+1}-2z^N+1=(z^{N+1}+1)+(-2z^N)$ as the sum of two holomorphic functions on $D_{r>1}=\{z\in\BBC: |z|<r\}$. On the boundary $\partial D_r=\{z:|z|=r\}$ we have (for $r>1$ close to $1$), by the triangle inequality (which asserts the first inequality below),
    \begin{equation}\label{eq:rouche-inequality}
        |z^{N+1}+1|\leqslant r^{N+1} + 1 < 2r^N = |-2z^N| ;
    \end{equation}
as $P(1)=0$ and $P'(1)=1-N$ is negative, 
the polynomial $P(x)$ stays negative on some one\/-\/sided interval 
$\bigl( 1, r+{\Delta r}_{>0} \bigr)$
close to~$1$ that includes $r>1$, whence the second inequality above.
Now, as $\partial D_r$ is a simple curve in $\BBC$ and we have 
inequality~\eqref{eq:rouche-inequality}, it follows by Rouche's theorem~\cite[\S\,10.43\,(b)]{RudinRC} that $(z^{N+1}+1)+(-2z^N)$ and $(-2z^N)$ have the same number of zeros inside $D_r$, counted with multiplicities. Taking the limit $r\to 1$ from above yields $N$ zeros of $P(z)$ inside the closed unit disk $D_{\bar 1}=\{z:|z|\leqslant 1\}$, one of which is $z=1$ (indeed, $P(1)=1-2+1=0)$. Let us check that no other roots lie on the boundary of the unit disk: the equation $0=P(e^{i\theta}) = e^{iN\theta}(e^{i\theta}-2)+1$ yields, after rearranging and taking the norm, $|2-e^{i\theta}|=1$, which occurs only at $\theta=0$. Therefore, the polynomials $P(z)$ and $p(z)$ have $N-1$ roots $\lambda_1,\ldots,\lambda_{N-1}$ inside the open unit disk $D_1=\{z:|z|<1\}$, leaving one (real) root of $P(z)$ and $p(z)$ outside the closed unit disk.

\smallskip
\textbf{2.}\quad   
Now we bound 
this big real root. Restrict $P(x)$ to $x\in [1,2]\subsetneq \BBR$, where $P(1)=0$, $P(2)=1$, and $P'(x)=x^{N-1}\bigl((N+1)x-2N\bigr)$. As $P'(1)<0$, the function $P(x)$ is negative over some interval $(1,1+\varepsilon)$ close to 1. The extrema of $P(x)$ are at $x_1=0$ and $x_2=2N/(N+1)>1$. As there is only one extremum in $(1,2)$, it must be a minimum. By the continuity of the polynomial $P(x)$, there is 
a unique root $\alpha_N\in (x_2,2)\subsetneq (1,2)$ of $P(x)$ (and of $p(x)$). As the second derivative $P''(x)$ vanishes only at $x=0$ and $x=x_2-2/(n+1)<x_2$, it follows that $P(x)$ is convex up over $(x_2,+\infty)\supsetneq (x_2,2]$. Thus the root $\alpha_N$ must lie to the right of the intersection of the $x$-axis and 
straight line connecting the points $(x_2,P(x_2))$ and $(2,P(2))$ on the plane.
This line is given by the formula $y(x)=P(x_2)+ \bigl(1-P(x_2)\bigr) (x-x_2)/(2-x_2)$; the function $y(x)$ has the root at
\[
    x_3=x_2+ 
        {(2-x_2)} \big/ {\bigl(1-1/P(x_2)\bigr)} = 2-x_2^{-N}>x_2=2N/(N+1),
\]
where the second (short
) expression of~$x_3$ is obtained by simplification.
Hence $\alpha_N\in (x_3,2)\subsetneq (x_2,2)\subsetneq (1,2)\subsetneq \BBR$ and $\lim_{N\to+\infty}\alpha_N=2$. 

Writing the closed-form solution for the recurrence $F_n^{(N)}=C\alpha_N^n + \sum_{j=1}^{N-1}c_j\lambda_j^n$ with $C>0$ (and $c_j,\lambda_j\in\BBC$) completes the proof of the proposition.
\end{proof}

\enlargethispage{0.85\baselineskip}
\begin{rem}\label{RemCompareWithWolframs}
Write $\alpha_N=2(1-\varepsilon_N)$, where $\varepsilon_N\in (0,1-x_3/2)=(0,x_2^{-N}/2)$. Let us compare this upper bound for $\varepsilon_N$ with Wolfram's~\cite[Lemma\,3.6]{Wolfram98}: $\varepsilon_N\in (0,2^{-N})$. Expanding~$x_2^{-N}/2$~yields
$
    \delta_N \mathrel{{:}{=}} x_2^{-N}/2 = \tfrac{1}{2}\bigl( 1+ 
    {1}/{N}\bigr)^N 2^{-N} $,
hence $\lim_{N\to+\infty} (\delta_N/2^{-N}) = e/2 \approx 1.359$.
So, both the bounds are asymptotically proportional to each other and decay ex\-po\-nen\-ti\-al\-ly~fast. 
\end{rem}

\end{document}